\begin{document}
\newcommand{\intL}{\int\limits}
\newcommand{\half}{^\infty_0 }
\newcommand{\intR}{\int\limits_{\mathbb{R}} }
\newcommand{\intRR}{\int\limits_{\mathbb{R}^2} }
\newcommand{\RR}{\mathbb{R}}
\newcommand{\xx}{\mathbf{x}}
\newcommand{\uu}{\mathbf{u}}
\newcommand{\yy}{\mathbf{y}}
\newcommand{\zz}{\mathbf{z}}
\newcommand{\aaa}{\mathbf{a}}
\newcommand{\ttheta}{\boldsymbol{\theta}}
\newcommand{\pphi}{\boldsymbol{\phi}}
\newcommand{\oomega}{\boldsymbol{\omega}}
\newcommand{\xxi}{\boldsymbol{\xi}}
\newcommand{\rmd}[1]{\mathrm d#1}
\newcommand{\ub}{\textbf{\textit{u}}}
\newcommand{\xb}{\textbf{\textit{x}}}
\newcommand{\kkappa}{\boldsymbol{\kappa}}
\newtheorem{thm}{Theorem}
\newtheorem{defi}[thm]{Definition}
\newtheorem{rmk}[thm]{Remark}
\newtheorem{cor}[thm]{Corollary}
\newtheorem{lem}[thm]{Lemma}
\newtheorem{prop}[thm]{Proposition}

%\modulolinenumbers[5]

%\journal{Applied Mathematics and Computation}
%\bibliographystyle{elsarticle-num}
\title{Singular value decomposition of the wave forward operator with radial variable coefficients} 
%\date{}
\author{Minam Moon$^{a}$, Injo Hur$^{b}$, Sunghwan Moon$^{c}$}%\fnref{myfootnote}}
\date{$^{a}$ Department of Mathematics, Korea Military Academy, Seoul 01805, Republic of Korea\\
$^{b}$ Department of Mathematical Education, Chonnam National University, Gwangju 61186, Republic of Korea\\
$^{c}$ Department of Mathematics, College of Natural Sciences, Kyungpook National University, Daegu 41566, Republic of Korea}
%\fntext[myfootnote]{shmoon@unist.ac.kr}
\maketitle
%
%%\begin{frontmatter}
 \begin{abstract}
Photoacoustic tomography (PAT) is a novel and promising technology in hybrid medical imaging that involves generating acoustic waves in the object of interest by stimulating electromagnetic energy.
The acoustic wave is measured outside the object. One of the key mathematical problems in PAT is the reconstruction of the initial function that contains diagnostic information from the solution of the wave equation on the surface of the acoustic transducers.
Herein, we propose a wave forward operator that assigns an initial function to obtain the solution of the wave equation on a unit sphere.
Under the assumption of the radial variable speed of ultrasound, we obtain the singular value decomposition of this wave forward operator by determining the orthonormal basis of a certain Hilbert space comprising eigenfunctions.
In addition, numerical simulation results obtained using the continuous Galerkin method are utilized to validate the inversion resulting from the singular value decomposition.

\texttt{Keywords: photoacoustic; tomography; image reconstruction; wave equation; singular value decomposition; Helmholtz equation}\\
\texttt{MSC2010:	35L05; 34B24; 31B20; 35R30; 	92C55}
 \end{abstract}

%%%%%%%%%%%%%%%%%%%%%%%%%%%%%%%%%%%%
 \section{Introduction}
%%&%%%%%%%%%%%%%%%%%%%%%%%%

Hybrid tomographic techniques, which use two different physical signals to obtain enhanced images, have been extensively studied.
Photoacoustic tomography (PAT) is the most successful example of hybrid biomedical imaging that is based on the photoacoustic effect discovered by Bell \cite{bell80}. It offers the advantages of both pure optical and ultrasound imaging.
Pure ultrasound imaging typically provides high-resolution images with a low contrast between the cancerous and healthy tissues, whereas optical or radio-frequency electromagnetic imaging offers high contrast with low resolution. 
PAT incorporates ultrasound and optical or radio-frequency electromagnetic waves and provides high-contrast and high-resolution images.

In PAT, the object of interest is irradiated by pulsed nonionizing electromagnetic energy, causing a small level of heating in the interior. 
A pressure wave is generated by the resulting thermoelastic expansion and propagates through the object. 
The electromagnetic energy absorbed is significantly higher in cancerous cells than in healthy tissues, and this absorbed energy represents the initial pressure.
Specifically, the initial pressure $f$ contains highly useful diagnostic information.
The pressure $p$ is measured using acoustic transducers placed along a surface completely (or partially) surrounding the object (see \cite{ammaribjk10,ammarigjn13,kuchment14book,kuchmentk08,xuw06} or references therein).

In this section, we describe the mathematical model underlying PAT and address the key mathematical problems associated with it.
We assume that point-like broadband ultrasound transducers are located along the observation surface $\Gamma$. 
Thus, the measured data represent the values of pressure $p(\xx,t)$ along the observation surface $\Gamma$, that is, $p(\xx,t)|_{\xx\in S}$.
It is assumed that $\Gamma$ is a unit sphere $S^{n-1}$ and 
the object of interest is inside a unit ball $B$.
With the speed of sound $c(\xx)$ at a location $\xx$, the following model is usually used to describe the propagating pressure wave $p(\xx,t)$ generated in PAT for $n=2,3$:
\begin{equation}\label{eq:pdeofpatorgin}
\begin{array}{ll}
\partial_t^2 p(\xx ,t)=c(\xx)\triangle_{\xx }p(\xx ,t)\qquad(\xx ,t)\in B\times[0,\infty)\\
p(\xx ,0)=f(\xx ) \quad\mbox{and}\quad\partial _t p(\xx ,0)=0 \\
\partial_\nu p(\xx,t)=0\quad\mbox{on}\quad\xx\in S^{n-1}\times[0,\infty).
\end{array}
\end{equation}
Here $f(\xx)$ is the required PAT image and $\nu$ denotes the outward normal to $S^{n-1}$ (\cite{ammarikk12,ammaridkk15}).

One of the mathematical problems associated with PAT is the recovery of the initial function $f$ from the solution of the wave equation on $S^{n-1}$.
We define the wave forward operator as $\mathcal W f(\xx,t)=p(\xx,t),$ $(\xx,t)\in B\times [0,\infty)$. 
%More precisely, we extend $f\in L^2(B, c(|\xx|)^{1-n})$ to be a continuous function on the whole space $\RR^n$ that vanishes outside the unit ball $B$.
%Then $\mathcal W f(\ttheta,t)$ satisfies \eqref{eq:pdeofpatorgin} with the extened function of $f$. 

There are several methods for reconstructing $f$ from $\mathcal Wf|_{S^{n-1}\times[0,\infty)}$ at constant speed \cite{ammaribjk10,ammarigjn13} (several studies \cite{anastasiozmr07,moonip18,dreierh20,finchhr07,finchpr04,moonjop16,kostlifbw01,kunyansky12,moonjmaa18,natterer12,nguyen09,sandbichlerkbbh15,moonzangerlip19} have focused on this topic, but $
\mathcal Wf $ satisfies the wave equation on the entire space without a boundary condition).
In this study, we obtain the singular value decomposition (SVD) of $\mathcal W$ under the assumption that the speed of sound is a radial function, that is, $c(|\xx|)$.
It is also reasonable to assume that the speed of sound is strictly positive, bounded away from $0$ and above, that is, $0<c_m< c(|\xx|)< c_M$ for positive constants $c_m$ and $c_M$ \cite{agranovskyk07,ammaridkk15,ammarikk12,stefanovu09}. %
%Here we show how to reconstruct $f$ from $\mathcal Wf$ through SVD of $\mathcal W$. To the best of our knowledge, the exact inversion of $\mathcal W$ with radial variable coefficient is first derived.

SVD is a powerful tool used to analyze a compact operator \cite{kazantsev15} and provides considerable insight into characterizing the range of the operator and inverting it for the corresponding inverse problems \cite{louis84,natterer01}.
Although the SVD of a spherical Radon transform has been studied previously \cite{moonip20,quinto83}, to the best of our knowledge, this is the first study focusing on the SVD of the wave forward operator.

In Section \ref{sec:SVD}, we present the SVD of the wave forward operator. To obtain the SVD, we find an orthonormal basis of $L^2(B, c(|\xx|)^{1-n})$ consisting of the eigenfunctions of the following based on the Sturm--Liouville problem: 
 $$
 c(|\xx|)\triangle_\xx\phi(\xx)+\mu^2_k\phi(\xx)=0 \quad\mbox{ and }\quad\partial_\nu \phi|_{S^{n-1}}=0.
$$
 Section \ref{sec:numerical} presents numerical simulations.
The continuous Galerkin finite element method (CG FEM) is used to determine the orthonormal basis $\{\phi\}$ and demonstrate the validity of the inversion formula derived from the SVD.
This paper ends with a discussion of the wave forward operator with the Dirichlet boundary condition in Section \ref{sec:discuss}.

%
%%%%%%%%%%%%%%%%%%%%%%%%%%%%%%%%%%%%%%%%%%%
\subsection{Preliminaries}
%%%%%%%%%%%%%%%%%%%%%%%%%%%%%%%%%
In this subsection, we introduce a certain non-separable Hilbert space on $S^{n-1}\times [0,\infty)$ that becomes the codomain of the wave forward operator. 
First, for  two functions $g_1$ and $g_2$ on $[0,\infty)$,  
$$
\langle g_1,g_2\rangle_{H[0,\infty)}=\lim_{A\to\infty}\frac2{A}\intL_{0}^A g_1(t)\overline{g_2(t)}{\rm d}t
$$
is defined as an inner product that satisfies the following property:
\begin{prop}\label{prop:hilbert1}
For $\iota,\iota'>0$,
\begin{equation}\label{eq:expansionSVDH1}
\langle \cos(\iota\cdot),\cos(\iota' \cdot) \rangle_{H[0,\infty)}=\delta_{\iota,\iota'}.
\end{equation}
\end{prop}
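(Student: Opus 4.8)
The plan is to evaluate the limit directly by reducing the product of cosines to a sum of pure cosines. First I would apply the product-to-sum identity
\begin{equation*}
\cos(\iota t)\cos(\iota' t)=\tfrac12\big[\cos((\iota-\iota')t)+\cos((\iota+\iota')t)\big],
\end{equation*}
so that the integrand is a combination of elementary cosines whose antiderivatives are explicit. This immediately splits the computation into two regimes according to whether $\iota=\iota'$, and in each regime the integral $\int_0^A(\cdots)\,{\rm d}t$ can be written in closed form.

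In the off-diagonal case $\iota\neq\iota'$, using $\iota,\iota'>0$ we have $\iota+\iota'>0$ and $\iota-\iota'\neq 0$, so integrating each term over $[0,A]$ yields
\begin{equation*}
\intL_0^A \cos(\iota t)\cos(\iota' t)\,{\rm d}t=\tfrac12\,\frac{\sin((\iota-\iota')A)}{\iota-\iota'}+\tfrac12\,\frac{\sin((\iota+\iota')A)}{\iota+\iota'}.
\end{equation*}
The right-hand side is bounded uniformly in $A$, so after multiplying by $2/A$ and letting $A\to\infty$ the entire expression tends to $0$, giving $\langle\cos(\iota\cdot),\cos(\iota'\cdot)\rangle_{H[0,\infty)}=0$.

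In the diagonal case $\iota=\iota'$, the same identity gives $\cos^2(\iota t)=\tfrac12[1+\cos(2\iota t)]$, whence
\begin{equation*}
\frac2A\intL_0^A \cos^2(\iota t)\,{\rm d}t=1+\frac{\sin(2\iota A)}{2\iota A}.
\end{equation*}
Since $\iota>0$, the remainder term is $O(1/A)$ and vanishes as $A\to\infty$, leaving the limit equal to $1$. Combining the two cases yields exactly $\delta_{\iota,\iota'}$, as claimed.

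This argument is essentially a direct computation rather than one with a genuine obstacle; the only point that must be watched is the role of the hypothesis $\iota,\iota'>0$. Positivity is precisely what guarantees that the denominators $\iota\pm\iota'$ and $2\iota$ are nonzero, so that all boundary contributions from the oscillatory terms are genuinely of order $1/A$ and disappear under the Ces\`aro-type averaging $\frac2A\int_0^A$. Allowing $\iota=0$ would alter the normalization of the constant term, which is why the statement is restricted to strictly positive frequencies.
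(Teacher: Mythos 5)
Your proof is correct and follows essentially the same route as the paper: a direct evaluation of $\int_0^A\cos(\iota t)\cos(\iota' t)\,\mathrm{d}t$ in closed form, followed by the observation that the oscillatory boundary terms are $O(1/A)$ under the averaging $\tfrac{2}{A}\int_0^A$. The paper writes the antiderivative in the equivalent form $\frac{2\iota'\cos(A\iota)\sin(A\iota')-2\iota\cos(A\iota')\sin(A\iota)}{\iota'^2-\iota^2}$ rather than via the product-to-sum identity, but the two computations are algebraically identical.
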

\begin{proof}
Direct computation yields 
$$
\begin{array}{ll}
\langle \cos(\iota \cdot),\cos(\iota \cdot) \rangle_{H[0,\infty)}=\displaystyle\lim_{A\to \infty}\frac2{A}\intL_{0}^A \cos^2(\iota t){\rm d}t=\lim_{A\to \infty}\left(1+\frac{\sin(2A \iota)}{2 A\iota}\right)=1,
\end{array}
$$
and for $\iota\neq\iota'$
$$
\begin{array}{ll}
\langle \cos(\iota \cdot),\cos(\iota' \cdot) \rangle_{H[0,\infty)}\displaystyle=\lim_{A\to \infty}\frac2{A}\intL_{0}^A \cos(\iota t)\cos(\iota' t){\rm d}t
=\lim_{A\to \infty}\frac{2\iota'\cos(A\iota)\sin(A\iota')-2\iota\cos(A\iota')\sin(A\iota)}{(\iota'^2-\iota^2)A}=0.
\end{array}
$$
\end{proof}
For $(l,\iota)\in\mathbb Z\times [0,\infty)$, let 
$$
\Psi_{l,\iota}(\ttheta,t)=\frac{e^{\mathrm{i}l\theta}\cos(\iota t)}{\sqrt{2\pi}} \quad\mbox{for}\quad (\ttheta,t)\in S^1\times [0,\infty),
$$
and $X(S^1\times[0,\infty))$ be the complex vector space comprising all finite linear combinations of these functions $\Psi_{l,\iota}$.
For $f,g\in X(S^1\times[0,\infty))$ and $a>0$,
$$
\langle f,g \rangle_H=\lim_{A\to\infty}\frac2{A}\intL_{0}^A \intL_{S^1}f(\ttheta,t)\overline{g(\ttheta,t)}{\rm d}S(\ttheta){\rm d}t
$$
is an inner product, which transforms $X(S^1\times[0,\infty))$ into a unitary space. Here, $\bar z$
is a complex conjugate of $z\in\mathbb C$. The completion of this space is a non-separable Hilbert space $H(S^1\times[0,\infty))$ such that $\{\Psi_{l,\iota}\}_{(l,\iota)\in \mathbb Z\times [0,\infty)}$ is an orthonormal basis in $H(S^1\times[0,\infty))$ (see \cite{rudin87}).
%Let $\{\psi_k(\xx)\}$ be the  eigenfunctions of $-c(|\xx|)\triangle_\xx$ with eigenvalue $\mu_k^2$ and $ \partial_\nu   \psi_k|_{ S^1}=0$, i.e., 
%\begin{equation}\label{eq:helmholz}
%c(|\xx|)\triangle_\xx\psi_k(\xx)+\mu^2_k\psi_k(\xx)=0\quad\mbox{on}\quad B.
%\end{equation}
%Then $ \{\psi_k(\xx)\}_{k=1}^\infty$ is the orthonormal basis of $L^2( B,c(\xx)^{-1}{\rm d}\xx)$ and thus
%$$
%f=\sum_{k=1}^\infty <f,\psi_k>\psi_k \quad\mbox{where}\quad<f,g>=\intL_{ B}f(\xx)\overline{g(\xx)}c(\xx)^{-1}{\rm d}\xx.
%$$
%Then we can easily to check $\mathcal W_{}f(\xx,t)=\sum_{k=1}^\infty <f,\psi_k>\psi_k(\xx)\cos(\mu_kt)$.  

To obtain the SVD of the wave forward operator, we determine the eigenfunctions of 
 $$
 c(|\xx|)\triangle_\xx\phi(\xx)+\mu^2_k\phi(\xx)=0
\quad\xx\in B\quad\mbox{ and }\quad\partial_\nu \phi|_{S^{n-1}}=0,
$$
which form an orthonormal basis for $L^2(B, c(|\xx|)^{1-n})$, the  $L^2$-space  defined on $B$ with a weight $c(|\xx|)^{1-n}$.

%%%%%%%%%%%%%%%%%%%%%%%%
\section{SVD of the wave forward operator}\label{sec:SVD}
%%%%%%%%%%%%%%%%%%%%%
In this section, we present the SVD of the wave forward operator $\mathcal Wf$ with a radial variable speed.
%%%%%%%%%%%%%%%%%%%%%%%%%%%%%%%%
\subsection{Two dimensions}
%%%%%%%%%%%%%%%%%%%%%%%
For $l\in\mathbb Z$, let $\phi_{k,l}(\xx)=h_{k,l}(|\xx|)e^{\mathrm{i}l\theta_\xx}/\sqrt{2\pi}$ and $\tan\theta_\xx=x_1/x_2.$
As $\triangle_\xx\phi =\partial_{r_{}}^2\phi+r_{ }^{-1}\partial_{r_{}}\phi+r_{ }^{-2}\partial_{\theta_{\xx}}^2\phi$, 
\begin{equation}\label{eq:helmholtz}
c(|\xx|)\triangle_\xx\phi_k(\xx)+\mu^2_k\phi_k(\xx)=0\quad\mbox{on}\quad 
  \xx=(r\cos\theta_{\xx},r\sin\theta_{\xx})  \in B \subset\RR^2 
\end{equation}
can be transformed into 
$$
    c(r)\left(\partial_{r}^2h_{k,l}(r)e^{\mathrm{i}l\theta_{\xx}}+r_{}^{-1}\partial_{r}h_{k,l}(r)e^{\mathrm{i}l\theta_{\xx}}-r_{}^{-2}l^2h_{k,l}(r)e^{\mathrm{i}l\theta_{\xx}}\right)+\mu^2_{k,l}h_{k,l}(r)e^{\mathrm{i}l\theta_{\xx}}=0,
$$
or equivalently
\begin{equation}\label{eq:ode}
r_{}^2\frac{\rm d^2}{{\rm d}r^2}h_{k,l}(r_{})+r_{{}}^{}\frac{\rm d}{{\rm d}r}h_{k,l}(r_{})+\left(\frac{r_{}^2\mu_{k,l}^2}{c(r_{})}-l^2\right)h_{k,l}(r_{})=0.
\end{equation}
After removing the indices for convenience, \eqref{eq:ode} can be written as
\begin{equation}\label{eveq:ode}
 \mathcal   S(h)(r)= \mu^2 r c(r)^{-1} h(r), 
\end{equation}
where 
\begin{equation*}
   \mathcal   S(h)(r):=-\frac{\rm d}{{\rm d}r} \left(r\frac{\rm d}{{\rm d}r} h(r) \right)+\frac{l^2}{r}h(r).%  \quad \left(\textrm{here } ':=\frac{\rm d}{{\rm d}r}\right). 
\end{equation*}
It must be noted that $ \mathcal S$ is the Sturm--Liouville operator acting on a weighted $L^2$ space, $L^2((0,1], rc(r)^{-1})$, and \eqref{eveq:ode} is its eigenvalue equation. Therefore, we can apply the theory of Sturm--Liouville operators. To ensure that the manuscript is concise as well as self-contained, the method of application is described in Appendix A of this paper. Using the method presented in Appendix A and applying the boundary condition, we obtain 
\begin{equation}\label{bc at 1}
\frac{\rm d}{{\rm d}r} h(1)=0
\end{equation}
at the endpoint $r=1$. Then $\mathcal S$ becomes a self-adjoint operator on $L^2((0,1], rc(r)^{-1})$ such that its spectrum is purely discrete (i.e., its eigenvalues are of finite multiplicities, and no continuous spectrum is observed); therefore, the set of eigenfunctions corresponding to $\mathcal S$ forms an orthonormal basis on $L^2((0,1], rc(r)^{-1})$. 

%Now we consider the following Sturm-Liouville problem: 
%%\begin{equation}\label{eq:SLP}
%\begin{eqnarray}
%\displaystyle r\partial_{r}^2h(r)+\partial_{r}h(r)+(\frac{\mu_{k,l}^2r}{c(r)}-\frac{l^2}r)h(r)=0\quad\mbox{on}\quad (0,1]\label{eq:SLP}\\
%h(0+) \mbox{ exists and is finite,} \quad h'(1_{})=0.\label{eq:IC}
%\end{eqnarray}

%%%%%%%%%%%%%%%%%%%%%%%%
\begin{comment}
Consider \eqref{eq:T} in the following Sturm--Liouville problem with boundary condition \eqref{eq:IC}:
\begin{equation}
\displaystyle r\frac{\rm d^2}{{\rm d}r^2} h(r)+\frac{\rm d}{{\rm d}r}h(r)+\left(\frac{\mu_{k,l}^2r}{c(r)}-\frac{l^2}r\right)h(r)=0\quad\mbox{on}\quad (0,1]\label{eq:SLP}.
\end{equation}
\end{comment}
%%%%%%%%%%%%%%%%%%%%%%%%%
Let $h_{k,l}$ be the eigenfunctions of \eqref{eveq:ode} with eigenvalue $\mu_{k,l}$ such that for a fixed $l$, $\mu_{1,l}<\mu_{2,l}<\mu_{3,l}\cdots$. It is well known that the dimension of the eigenspace is one because of the separated boundary condition. The detailed description is provided in a previous study (Section 3.6 of \cite{folland92}).
Thus, 
$$
\phi_{k,l}(\xx)= h_{k,l}(|\xx|)e^{\mathrm{i}l\theta_\xx}/\sqrt{2\pi}, (k,l)\in\mathbb N\times\mathbb Z
$$
are the eigenfunctions of 
$$
c(|\xx|)\triangle_\xx\phi(\xx)+\mu^2_k\phi(\xx)=0 \quad\mbox{ and }\quad\partial_\nu \phi|_{S^{1}}=0
$$ 
with eigenvalue $\mu_{k,l}$ and they form an orthonormal basis of $L^2_{}(B,c(|\xx|)^{-1})$, because 
$$
||\phi_{k,l}||_{L^2(c(\xx)^{-1})}^2=(2\pi)^{-1}\intL_{0}^{2\pi}\intL\half |h_{k,l}(r)e^{\mathrm{i}l\theta}|^2rc(r)^{-1}{\rm d}r{\rm d}\theta=1.
$$
%%% remove . and relocate ,  
Thus, for $f\in L^2_{}(B,c(|\xx|)^{-1})$, 
$$
f(\xx)=\sum_{l\in\mathbb Z}\sum_{k=1}^\infty \langle f,\phi_{k,l}\rangle \phi_{k,l}(\xx)\quad\mbox{and}\quad \langle f,g \rangle=\intL_{ B}f(\xx)\overline{g(\xx)}c(\xx)^{-1}{\rm d}\xx.
$$
\begin{rmk}
If $h_{k,l}(1)=0$, then because of the uniqueness of the initial value problem of the second linear differential equations, the boundary condition $\frac{\rm d}{{\rm d}r} h_{k,l}(1)=0$  (from \eqref{bc at 1}) yields $h_{k,l}\equiv0$. Therefore, for nonzero values, $h_{k,l}(1)\neq0$. 
\end{rmk}
%\begin{rmk}
%If $h_{k,l}(1)=0$, $\phi_{k,l}(\ttheta)=0$ and 
%$$
%\begin{array}{ll}
%\mathcal W_{}f(\ttheta,t)&\displaystyle=\sum_{l\in\mathbb Z}\sum_{k=1}^\infty <f,\phi_{k,l}>\phi_{k,l}(\ttheta)\cos(\mu_{k,l}t)\displaystyle =\sum_{l\in\mathbb Z,k\in\mathbb N,\atop h_{k,l}(1)\neq 0} <f,\phi_{k,l}>\phi_{k,l}(\ttheta)\cos(\mu_{k,l}t).
%\end{array}
%$$ 
%\end{rmk}

%\subsection{$n=3$}
%%%%%%%%%%%%%%%%%%%%%%%%%%%%%%%%%%%%%%%%%%%%
%\section{Dirichlet Problem}\label{sec:dirichlet}
%%%%%%%%%%%%%%%%%%%%%%%%%%%%%%%%%%%%%%%%%%%%%%%%%%
For two Hilbert spaces $X$ and $Y$, let $\mathcal A: X \to Y$ be a linear operator.
The triple $\{\Phi_l,\Psi_l,\sigma_l\}_{l\ge 0}$ is an SVD of the operator $\mathcal A$ if
\begin{itemize}
	\item  $\{\Phi_l\}_l$ is an orthonormal basis in $X$;
	\item  $\{\Psi_l\}_l$ is an orthonormal set in $Y$;
	\item  $\{\sigma_l\}_l$ is a set of non-zero constants, $\mathcal A\Phi_l=\sigma_l \Psi_l.$
\end{itemize}

\begin{thm}\label{thm:co}
Let $\phi_{k,l}(\xx)= h_{k,l}(|\xx|)e^{\mathrm{i}l\theta_\xx}/\sqrt{2\pi}$, where $h_{k,l}$ is the solution of \eqref{eq:ode} (with \eqref{bc at 1}) corresponding to the eigenvalue $\mu_{k,l}$ and $||h_{k,l}||_{L^2((0,1],rc(r)^{-1})}=1$.% and $H=\span\{\phi_{k,l}:k\in\mathbb N,l\in\mathbb Z\}$.
Then, 
\begin{enumerate}
\item %the operator $\mathcal W$, densely defined on $\span\{\phi_{k,l}:k\in\mathbb N,l\in\mathbb Z\}$, uniquely extends to linear bounded operator $\mathcal W_{}\cdot|_{S^{1}\times[0,\infty)}:L^2_{}(B,c(|\xx|)^{-1})\to H(S^1\times[0,\infty))$ 
for $f\in \operatorname{span}\{\phi_{k,l}:k\in\mathbb N,l\in\mathbb Z\}$, we obtain
$$
\mathcal W_{}f(\xx,t)=\sum_{k,l} \langle f,\phi_{k,l} \rangle \phi_{k,l}(\xx)\cos(\mu_{k,l}t).
$$ 
\item $\{\phi_{k,l},\Psi_{l,\mu_{k,l}},h_{k,l}(1)\}$ is an SVD of $\mathcal W_{}\cdot|_{S^{1}\times[0,\infty)}:L^2_{}(B,c(|\xx|)^{-1})\to H(S^1\times[0,\infty))$.
\item  For $f\in \operatorname{span}\{\phi_{k,l}:k\in\mathbb N,l\in\mathbb Z\}$, we obtain 
\begin{equation}\label{eq:main}
\langle f,\phi_{k,l} \rangle = |h_{k,l}(1)|^{-2} \langle \mathcal W_{}f,\phi_{k,l}(\cdot)\cos(\mu_{k,l} \cdot)\rangle_H.
%\displaystyle\lim_{A\to\infty}\frac{1}{2\pi |h_{k,l}(1)|^2 A}\intL^{A}_0\intL_{S^1} \overline{\phi_{k,l}(\ttheta)}\cos(\mu_{k,l} t)\mathcal W_{}f(\ttheta,t) {\rm d}S(\ttheta){\rm d}t.
\end{equation}
\end{enumerate}
\end{thm}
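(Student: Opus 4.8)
The plan is to handle the three parts in sequence, concentrating the substantive work in part (1); parts (2) and (3) then reduce to direct verifications using orthonormality relations already in hand. For part (1), I would solve the initial boundary value problem \eqref{eq:pdeofpatorgin} by separation of variables against the basis $\{\phi_{k,l}\}$. Writing the finite sum $f=\sum_{k,l}\langle f,\phi_{k,l}\rangle\phi_{k,l}$, I posit the ansatz $p(\xx,t)=\sum_{k,l}\langle f,\phi_{k,l}\rangle\phi_{k,l}(\xx)T_{k,l}(t)$. Since each $\phi_{k,l}$ satisfies $c(|\xx|)\triangle_\xx\phi_{k,l}=-\mu_{k,l}^2\phi_{k,l}$, substituting into $\partial_t^2 p=c(|\xx|)\triangle_\xx p$ decouples the problem into the scalar equations $T_{k,l}''(t)=-\mu_{k,l}^2 T_{k,l}(t)$. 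The initial conditions $p(\cdot,0)=f$ and $\partial_t p(\cdot,0)=0$ force $T_{k,l}(0)=1$ and $T_{k,l}'(0)=0$ by linear independence of the $\phi_{k,l}$, so $T_{k,l}(t)=\cos(\mu_{k,l}t)$. The Neumann condition $\partial_\nu p|_{S^1}=0$ holds term by term because $\partial_\nu\phi_{k,l}|_{S^1}=0$ by \eqref{bc at 1}. As the sum is finite there are no convergence obstructions, so the candidate is smooth and genuinely solves the problem; uniqueness of solutions to the hyperbolic initial boundary value problem then identifies it with $\mathcal W f$, giving the stated formula.

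For part (2), I would check the three defining properties of an SVD. That $\{\phi_{k,l}\}$ is an orthonormal basis of $L^2(B,c(|\xx|)^{-1})$ is established just before the statement. Evaluating the part (1) formula at $f=\phi_{k,l}$ and restricting to $|\xx|=1$ gives $\mathcal W\phi_{k,l}|_{S^1\times[0,\infty)}=h_{k,l}(1)\Psi_{l,\mu_{k,l}}$, whose singular value $h_{k,l}(1)$ is non-zero by the preceding Remark. It remains to verify that $\{\Psi_{l,\mu_{k,l}}\}$ is an orthonormal set in $H(S^1\times[0,\infty))$: the angular integral of $e^{\mathrm i(l-l')\theta}$ supplies $\delta_{l,l'}$, while Proposition \ref{prop:hilbert1} supplies $\delta_{\mu_{k,l},\mu_{k',l'}}$ in the time variable; the strict ordering $\mu_{1,l}<\mu_{2,l}<\cdots$ guarantees that distinct pairs $(k,l)$ yield distinct functions $\Psi_{l,\mu_{k,l}}$, so no collision occurs.

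For part (3), I would invert the diagonalization directly. The test function $\phi_{k,l}(\cdot)\cos(\mu_{k,l}\cdot)$ restricted to $S^1$ equals $h_{k,l}(1)\Psi_{l,\mu_{k,l}}$, and by part (1) we have $\mathcal W f|_{S^1\times[0,\infty)}=\sum_{k',l'}\langle f,\phi_{k',l'}\rangle\,h_{k',l'}(1)\Psi_{l',\mu_{k',l'}}$. Pairing these in $\langle\cdot,\cdot\rangle_H$ and using the orthonormality from part (2) collapses the double sum to the single surviving term; since $h_{k,l}$ is real-valued, the conjugation in the second slot is harmless and the product of singular values contributes $h_{k,l}(1)^2=|h_{k,l}(1)|^2$, so dividing through yields \eqref{eq:main}.

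The main obstacle I anticipate lies in the non-separable codomain $H$: the orthonormality of $\{\Psi_{l,\mu_{k,l}}\}$ rests on Proposition \ref{prop:hilbert1}, which requires every frequency $\mu_{k,l}$ to be strictly positive. One must therefore confirm that the eigenvalue $\mu=0$ (the constant mode, which arises for $l=0$) either does not occur under the working hypotheses or is excluded from the expansion; otherwise the associated $\Psi$ would have norm $\sqrt 2$ rather than $1$ and would break the orthogonal normalization underlying both parts (2) and (3). Apart from this point, the argument is bookkeeping with the two orthogonality relations.
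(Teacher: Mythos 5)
Your argument follows the paper's proof essentially verbatim: part (1) by separation of variables against the eigenbasis (which the paper dismisses as ``we can easily check''), part (2) by noting $\mathcal W\phi_{k,l}|_{S^1\times[0,\infty)}=h_{k,l}(1)\Psi_{l,\mu_{k,l}}$ together with Proposition \ref{prop:hilbert1}, and part (3) by the same expand-and-collapse computation of $\langle\mathcal Wf,\phi_{k,l}(\cdot)\cos(\mu_{k,l}\cdot)\rangle_H$. Your closing caveat about the zero mode is well taken --- the constant function is a Neumann eigenfunction with $\mu=0$, and $\langle 1,1\rangle_{H[0,\infty)}=2$, so Proposition \ref{prop:hilbert1} does not cover that frequency --- but this is a gap in the paper's own treatment rather than in your argument, which otherwise matches the published proof.
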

\begin{proof}
\textit{1}. We can easily check this.%It must be noted $\mathcal W_{}\phi_{k,l}(\ttheta,t)=h_{k,l}(1)\Psi_{l,\mu_{k,l}}(\ttheta,t)$ can be easily verified, which yields \textit{1}. 

\textit{2}. It must be noted $\mathcal W_{}\phi_{k,l}(\ttheta,t)=h_{k,l}(1)\Psi_{l,\mu_{k,l}}(\ttheta,t)$ and $\{\Psi_{l,\iota}\}_{(l,\iota)\in \mathbb Z\times [0,\infty)}$ is an orthonormal basis in $H(S^1\times[0,\infty))$, as obtained in Proposition 
\ref{prop:hilbert1}.
%%%%%%%%%%% remove the and )

\textit{3}. Considering the right-hand side of \eqref{eq:main}, we obtain
$$
\begin{array}{l}
 \langle \mathcal W_{}f,\phi_{k,l}(\cdot)\cos(\mu_{k,l} \cdot)\rangle_H\\
 \qquad\qquad =\displaystyle \sum_{l',k'}\intL_{S^1} \overline{\phi_{k,l}(\ttheta)}\phi_{k',l'}(\ttheta) {\rm d}S(\ttheta) \langle f,\phi_{k',l'} \rangle  \langle\cos(\mu_{k',l'}\cdot),\cos(\mu_{k,l} \cdot)\rangle_{H[0,\infty)}\\
 \qquad\qquad=\displaystyle\sum_{l',k'} |h_{k,l}(1)|^2 \delta_{l,l'}\delta_{k,k'} \langle f,\phi_{k,l}\rangle,
\end{array}
$$
where in the first equality, we used the condition {\textit 1}, and in the second equality, we used Proposition \ref{prop:hilbert1} and the orthogonality of $e^{\mathrm{i}l\theta}$.
 
\end{proof}

%%%%%%%%%%%%%%%%%%%%%%%%%%%%%%%%%%%%%%%%%%%%%%%%%

 \subsection{Three dimensions}
In this section, we consider the case of three dimensions ($n=3$). In this case, the same procedure, as described above, is followed (except for the slightly different eigenvalue equation \eqref{eq:ode 3D}). 
 
For $l=0,1,2,\cdots $ and $ k=-l,\cdots,l$, let $\tilde\phi_{m,lk}(\xx)=\tilde h_{m,l}(|\xx|)Y_{lk}\left(\frac{\xx}{|\xx|}\right)$, where $Y_{lk}$ are spherical harmonics.
Because $\triangle_\xx\phi =r^{-2}\partial_{r_{}} (r^2\partial_{r_{}}\phi)+r_{ }^{-2}\triangle_{S^2}\phi$,  the equation 
\begin{equation*}\label{eq:helmholtz3d}
c(|\xx|)\triangle_\xx\phi_k(\xx)+\mu^2_m\phi_m(\xx)=0\quad\mbox{on}\quad B\subset\RR^3
\end{equation*}
becomes for $l\in\mathbb Z$ and $ k=-l,\cdots,l$,
$$
    \frac{c(r)}{r^{2}}\left\{\frac{\rm d}{{\rm d}r}\left(r^2\frac{\rm d}{{\rm d}r}   \tilde h_{m,l}(r)\right)Y_{lk}\left(\ttheta\right)+r_{ }^{-2}\tilde h_{m,l}(r)\triangle_{S^2}Y_{lk}\left(\ttheta\right)\right\}+\tilde\mu^2_{m,l}\tilde h_{m,l}(r)Y_{lk}\left(\ttheta\right)=0,
$$
or equivalently,
\begin{equation}\label{eq:ode 3D}
\frac{\rm d}{{\rm d}r}\left(r^2\frac{\rm d}{{\rm d}r}\tilde h_{m,l}(r_{})\right)+\left(\frac{r_{}^2\tilde\mu_{m,l}^2}{c(r_{})}-l(l+1)\right)\tilde h_{m,l}(r_{})=0,
\end{equation}
as $\triangle_{S^2}Y_{lk}\left(\ttheta\right)=-l(l+1)\triangle_{S^2}Y_{lk}\left(\ttheta\right)$.
%\begin{equation}\label{eq:ode}
%r^2\frac{\rm d^2}{{\rm d}r^2}h_{m,l}(r)+2r\frac{\rm d}{{\rm d}r}h_{m,l}(r_{})+(\frac{r_{}^2\mu_{m,l}^2}{c(r_{})}-l(l+1))h_{m,l}(r_{})=0.
%\end{equation}

Similar to the case when $n=2$ (see Remark in Appendix A), the operator related to \eqref{eq:ode 3D} with the boundary condition $\frac{\rm d}{{\rm d}r}\tilde h(1)=0$ is a self-adjoint operator and purely discrete. Therefore, the corresponding eigenfunctions form an orthonormal basis in the weighted $L^2$-space $L^2((0,1],r^2c(r)^{-1})$.  

Furthermore, similar to the $n=2$ case, we can easily verify that $\mathcal W_{}\phi_{k,l}(\ttheta,t)= \tilde h_{m,l}(1)Y_{lk}(\ttheta)\cos(\tilde\mu_{m,l}t)$, and for $f\in L^2_{}(B^1,c(|\xx|)^{-1})$, we obtain
$$
\mathcal W_{}f(\xx,t)=\sum_{k,l} \langle f,\tilde\phi_{m,lk} \rangle \tilde \phi_{m,lk}(\xx)\cos(\tilde \mu_{m,l}t)
$$ 
and 
\begin{equation*}%\label{eq:main}
\langle f,\tilde\phi_{m,lk} \rangle =\displaystyle\lim_{A\to\infty}\frac{2}{  |\tilde h_{m,l}(1)|^2 A}\intL^{A}_0\intL_{S^2} \overline{\tilde\phi_{m,lk}(\ttheta)}\cos(\tilde\mu_{m,l} t)\mathcal W_{}f(\ttheta,t) {\rm d}S(\ttheta){\rm d}t.
\end{equation*}
Finally, it can be concluded that $\{\tilde\phi_{m,l}(\xx),Y_{lk}(\ttheta)\cos(\tilde\mu_{m,l}t),\tilde h_{m,l}(1)\}$ is  the SVD of the wave forward operator $\mathcal W_{}\cdot|_{S^{3}\times[0,\infty)}:L^2_{}(B^1,c(|\xx|)^{-1})\to H(S^1\times[0,\infty))$.
\begin{comment}
For any $f\in L^2((0,1], r^2c(r)^{-1})$, the operator related to 
%there exists a unique $h\in H^1((0,1], r^2c(r)^{-1})$ satisfying
\begin{equation}\label{eq:T for 3D}
\displaystyle \frac{c(r)}{r^2}\left(-\frac{\rm d}{{\rm d}r}\left(r^2\frac{\rm d}{{\rm d}r}\right)+l(l+1)\right)h(r)=f(r)\quad\mbox{on}\quad (0,1]    
\end{equation}
with the boundary condition $h'(1)=0.$    
\end{comment}

%%%%%%%%%%%%%%%%%%%%%%%%%%%%%%%
\section{Numerical simulation}\label{sec:numerical}
%%%%%%%%%%%%%%%%%%%%%%
In this section, we present the reconstruction results in 2D with the SVD of the wave forward operator $\mathcal W_{}$ with radial variable coefficients in Theorem \ref{thm:co} using data $\mathcal W_{}f$. To numerically solve \eqref{eq:pdeofpatorgin} and \eqref{eq:helmholtz}, we consider a circular domain $B$ of radius 3 in 2D. For space discretization, we divide the circular domain $B$ into triangulations with $h_{max} = 0.15$. It contains unstructured grid points in $B$ owing to its geometric characteristics. For time discretization, we set the maximum time as $T = 800$ with a time-step size ${\rm d}t = 0.0016$. To compute the eigenfunction $\phi_{}$ of \eqref{eq:helmholtz}, we use the  continuous Galerkin (CG) FEM on the 2D circular domain $B$. The discrete problem with the CG FEM is as follows \cite{gross07, larsson03}:
\begin{equation}\label{eq:helmholtz_fem}
a(\phi, v_h) = \mu^2 m(\phi, v_h) \quad \text{for all} \quad v_h\in V_h ,
\end{equation}
where the finite element space $V_h$ is composed of piecewise linear functions and 
$$
a(u, v)=\intL_B \nabla u(\xx) \cdot \nabla v(\xx) {\rm d}\xx\quad \mbox{ and }\quad m(u, v)=\intL_B c(|\xx|)^{-1} u(\xx) v(\xx) {\rm d}\xx.
$$

To generate data from the phantom distribution $f$ and orthonormal basis set $\{\Psi_k\}$, we must numerically compute $\mathcal W_{}f$ and $\mathcal W \phi$. Therefore, we consider the following discrete wave propagation using the CG FEM with a backward-Euler scheme \cite{gross07, larsson03}:
\begin{equation}\label{eq:pat_fem}
\begin{array}{rl}
\displaystyle c(\xx)^{-1}\frac{1}{\Delta t ^2}\left(p_h^{n+1}-2p_h^n+p_h^{n-1}, v_h\right)+ a\left( p_h^{n+1}, v_h\right)&=0\\
\displaystyle p_h^0=f~ \mbox{or}~ \phi_k \quad\mbox{and}\quad
\frac{1}{\Delta t}\left(p_h^1 - p_h^{0}\right) &=0,
\end{array}
\end{equation}
for all $v_h \in V_h$.
Here, the bilinear form $a$ and finite element space $V_h$ are the same as those in \eqref{eq:helmholtz_fem}. 

\begin{figure}[h!]
\centering
\begin{subfigure}[b]{0.45\textwidth}
\includegraphics[width=\textwidth]{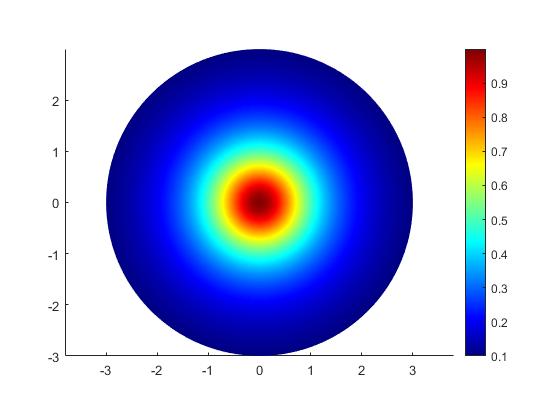}
\caption{Continuous coefficient $c_1(|\xx|)$}
\label{fig:coef-a}
\end{subfigure}
\begin{subfigure}[b]{0.45\textwidth}
\includegraphics[width=\textwidth]{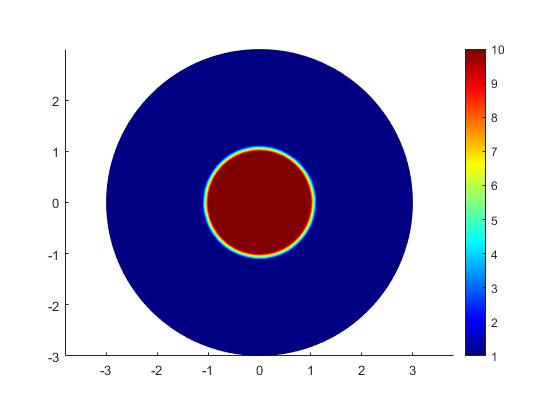}
\caption{Discontinuous coefficient $c_2(|\xx|)$}
\label{fig:coef-b}
\end{subfigure}
\caption{Examples of radial variable coefficients.}
\label{fig:coef}
\end{figure}

We tested our methods on two different distributions of the coefficient $c(|\xx|)$, as shown in Figure \ref{fig:coef}. The coefficient in Figure \ref{fig:coef}(a) is continuous and defined as $c_1(|\xx|)=1/(1+|\xx|^2)$. The coefficient in Figure \ref{fig:coef}(b) takes the values one and five   in the blue and red regions, respectively. These two coefficients reproduce the fact that the speed of the ultrasonic wave continuously decreases as it moves away from the center and it is discontinuous before and after passing through a specific object.    
      
\begin{algorithm}
\caption{SVD Reconstruction algorithm}\label{SVD algorithm}
\begin{algorithmic}[1]
\Procedure{SVD}{$f$}\Comment{The phantom distribution $f$}
   \State $\mathbf{b}\approx \mathcal Wf|_{S^{1}\times[0,\infty)}$ \Comment{Solving \eqref{eq:pat_fem} with the initial condition $f$}
   \State Find eigenfunctions $\phi_{k}$ and eigenvalue $\mu_k$ \Comment{Solving \eqref{eq:helmholtz_fem}}
   \While{$k \leq N$}\Comment{$N$ is the number of eigenfunctions}
      \State Compute $\Psi_{k} \approx \mathcal W_{}\phi_{k}|_{S^{1}\times[0,\infty)}$ \Comment{Solving \eqref{eq:pat_fem} with the initial condition $\psi_k$}
      \State $A(:,k) \gets \Psi_{k}$
   \EndWhile
   \State Solve $AX=\mathbf{b}$
   \State Reconstruction $\tilde{f} = \sum_{k=1}^{N} x_k \phi_{k}$
   \State \textbf{return} $\tilde{f}$ \Comment{Reconstruction image of $f$ is $\tilde{f}$}
\EndProcedure
\end{algorithmic}
\end{algorithm}

Algorithm \ref{SVD algorithm} describes the procedure of the image reconstruction using the SVD of the wave forward operator $\mathcal W$. 
%In the actual situation, data is given, but we use the phantom $f_1$ or $f_2$ used for simulation as the initial condition, solve \eqref{eq:pat_fem}, and then generate the data $\mathbf{b}$ restricting the value of sensing region. 
To obtain the data, we use the phantom distribution $f_1$ or $f_2$ as an initial condition, solve \eqref{eq:pat_fem}, and then generate data for $\mathbf{b}$ by restricting the value in the sensing region.
To generate the orthonormal basis $\phi_k$ of Theorem \ref{thm:co}, we solve the discrete problem in \eqref{eq:helmholtz_fem}. Additionally, we solve \eqref{eq:pat_fem} with the initial condition $\phi_k$, which is similar to the process of generating data $\mathbf{b}$ to obtain an orthonormal function $\Psi_k$ defined in the sensing region. Following the creation of a matrix using the new function $\Psi_k$ as a column vector, we solve the linear system $AX=\mathbf{b}$. Finally, the image is numerically reconstructed as a linear combination of the solution $X$ and orthonormal basis $\{\phi_k\}$.    
   
Figure \ref{fig:recon} shows the reconstruction results of the SVD algorithm described in the previous subsection applied to two different data cases $f_1$ and $f_2$ for radial variable coefficients. The first initial distribution $f_1$ can verify the discontinuous properties of an object by matching the coefficient $c_2(|\xx|)$, and the second $f_2$ can verify whether it is generally applicable to more diverse shapes. 
In Figure \ref{fig:recon}, the second column shows the reconstruction results for the continuous coefficient $c_1(|\xx|)$, and the third column shows the reconstruction results for the discontinuous coefficients $c_2(|\xx|)$. For the reconstruction of the phantom distributions $f_1$ and $f_2$, we used 1473 and 1533 orthonormal basis functions $\Psi_k$, respectively. The reconstruction results indicate that the SVD algorithm yields good performance, regardless of the continuity of the coefficients. 
    
\begin{figure}[h!]
\centering
\begin{subfigure}[b]{0.3\textwidth}
\includegraphics[width=\textwidth]{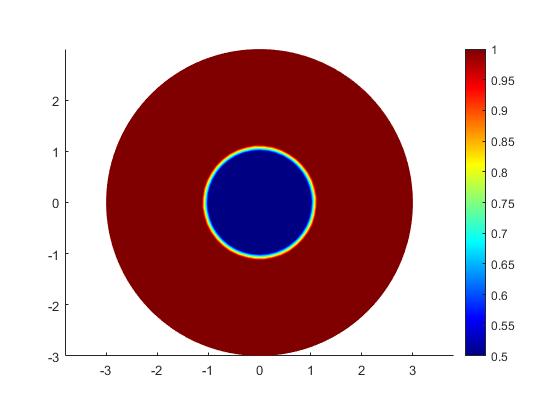}
\caption{Initial image $f_1$}
\end{subfigure}
\begin{subfigure}[b]{0.3\textwidth}
\includegraphics[width=\textwidth]{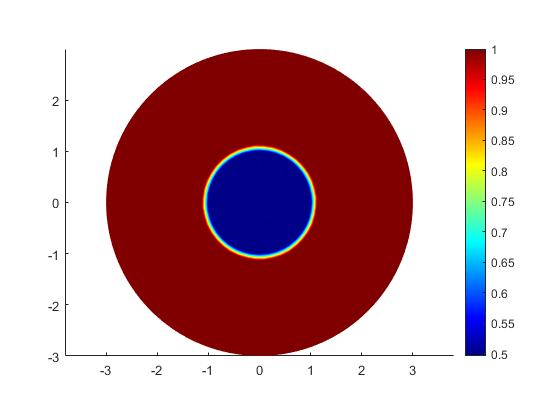}
\caption{Reconstruction with $c_1(|\xx|)$}
\label{fig:recon1_con}
\end{subfigure}
\begin{subfigure}[b]{0.3\textwidth}
\includegraphics[width=\textwidth]{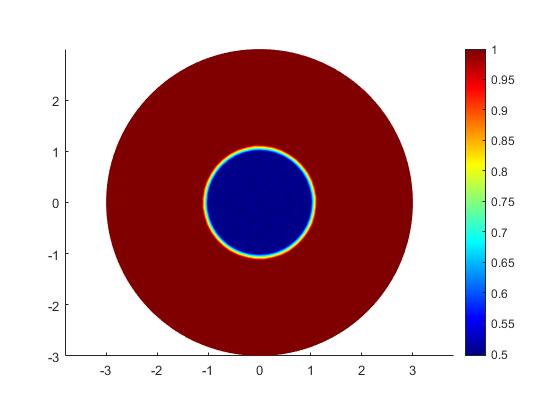}
\caption{Reconstruction with $c_2(|\xx|)$}
\label{fig:recon1_dis}
\end{subfigure}
\begin{subfigure}[b]{0.3\textwidth}
\includegraphics[width=\textwidth]{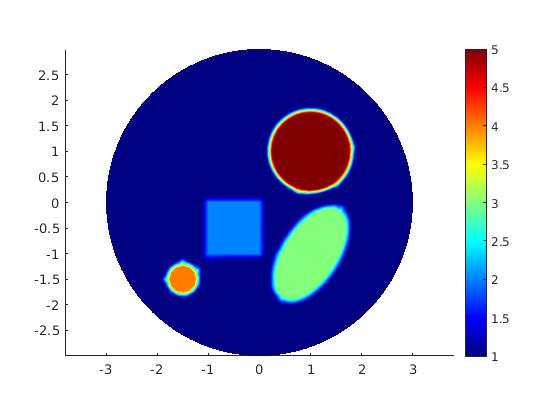}
\caption{Initial image $f_2$}
\end{subfigure}
\begin{subfigure}[b]{0.3\textwidth}
\includegraphics[width=\textwidth]{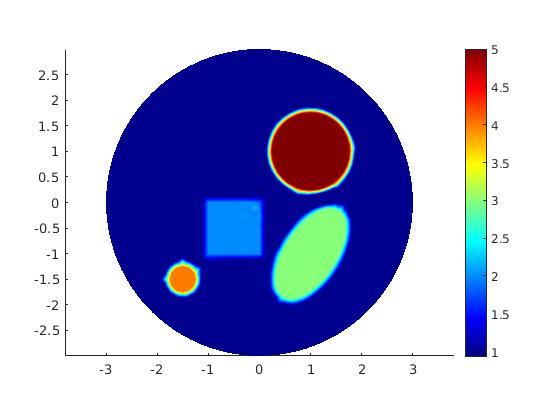}
\caption{Reconstruction with $c_1(|\xx|)$}
\label{fig:recon2_con}
\end{subfigure}
\begin{subfigure}[b]{0.3\textwidth}
\includegraphics[width=\textwidth]{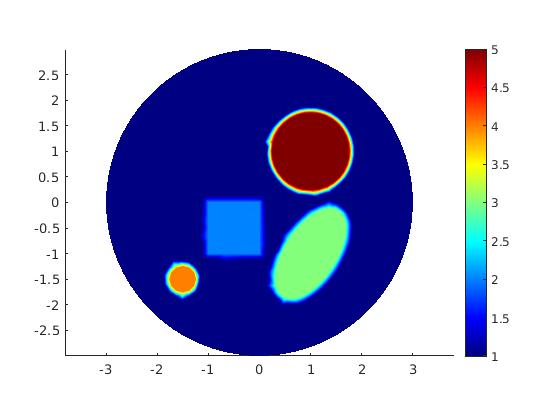}
\caption{Reconstruction with $c_2(|\xx|)$}
\label{fig:recon2_dis}
\end{subfigure}
\caption{Reconstruction results with radial variable coefficients.}
\label{fig:recon}
\end{figure}

%%%%%%%%%%%%%%%%%%%%%%%%%%%%%%%%%%%%%%%%%%%%%%%%
\section{Conclusion and discussion}\label{sec:discuss}
%%%%%%%%%%%%%%%%%%%%%%%%%%%%%%%%%%%%%%%%%%%%%
In this study, we determine the SVD of the wave forward operator with the Neumann boundary condition and radial variable speed.
To obtain the SVD, we first find an orthonormal basis $L^2_{}(B,c(|\xx|)^{-1})$ consisting of the eigenfunctions of 
 $ c(|\xx|)\triangle_\xx\phi(\xx)+\mu^2_k\phi(\xx)=0$ and $\partial_\nu \phi|_{S^{n-1}}=0.$
 Subsequently, after obtaining $\mathcal W \phi$, we show that $\mathcal W \phi$ is orthogonal.

This strategy can be applied to the wave forward operator with the Dirichlet boundary condition and radial variable speed.
%In \eqref{eq:pdeofpatorgin}, we can give the Dirichlet boundary condition assumption instead of the Neumann boundary condition (of course, we assume that $c(\xx)$ is also radial).
Specifically, the wave forward operator {\color{red} $\mathcal W_D$} satisfies the wave equations with $\mathcal W_Df(\xx ,0)=f(\xx )$, $\partial _t \mathcal W_Df(\xx ,0)=0$, and $\mathcal W_Df(\ttheta,t)=0.$ 
Finally, when the data can be represented as $\partial_\nu\mathcal W_Df(\ttheta,t)$, the SVD of $\partial_\nu\mathcal W_D$ for $n=2$ can be obtained using the following steps.
\begin{enumerate}
\item It can be shown that there exist eigenfunctions $h_{k,l,D}(r)$ of 
\begin{eqnarray*}
&\displaystyle\frac{c(r)}r\left(-\frac{\rm d}{{\rm d}r}\left(r\frac{\rm d}{{\rm d}r}\right)+\frac{l^2}r\right)h(r)=\mu_{k,l,D}h(r) \,\, \textrm{ with } \,\, h(1)=0
\end{eqnarray*}
(with eigenvalues $\mu_{k,l,D}$) such that they form an orthonormal basis of $L^2_{}((0,1],rc(r)^{-1})$.
\item For $\phi_{k,l,D}=h_{k,l}(|\xx|)e^{\mathrm{i}l\theta_\xx}/\sqrt{2\pi}$, $\mathcal W\phi_{k,l,D}=\frac{\rm d}{{\rm d}r}h_{k,l,D}(1)\Psi_{l,\mu_{k,l,D}}$. 
\item For $f\in L^2_{}(B^1,c(|\xx|)^{-1})$, we obtain
$$
\mathcal W_{D}f(\xx,t)=\sum_{k,l} \langle f,\phi_{k,l,D}\rangle \phi_{k,l,D}(\xx)\cos(\mu_{k,l,D}t).
$$ 
\item $\{\phi_{k,l,D},\Psi_{l,\mu_{k,l,D}},\frac{\rm d}{{\rm d}r}h_{k,l,D}(1)\}$ is an SVD of $\mathcal W_{D}\cdot|_{S^{1}\times[0,\infty)}:L^2_{}(B^1,c(|\xx|)^{-1})\to H(S^1\times[0,\infty))$.
%\item for $f\in \operatorname{span}\{\phi_{k,l,D}:k\in\mathbb N, l\in\mathbb Z\}$ and $(k,l)\in\mathbb N\times \mathbb Z $ with $<f,\phi_{k,l,D}>\neq 0$, 
%\begin{equation*}%\label{eq:main}
%<f,\phi_{k,l}>=%\displaystyle\intL_{ B} [\lambda_k^{-1}\overline{\phi_k(\uu)}\sin(\lambda_k T)\partial _\nu \mathcal W_{}f(\xx,T)-\lambda_k^{-1}\overline{\phi_k(\uu)}\sin(\lambda_k T) \mathcal W_{}f(\xx,T)]c(\xx)^{-1}{\rm d}\xx\\
%\displaystyle\lim_{A\to\infty}\frac{1}{2\pi |h_{k,l}(1)|^2 A}\intL^{A}_0\intL_{S^1} \overline{\phi_{k,l}(\ttheta)}\cos(\mu_{k,l} t)\mathcal W_{}f(\ttheta,t) {\rm d}S(\ttheta){\rm d}t.
%\end{equation*}
\end{enumerate}
A similar procedure can be followed to obtain a similar result for $\partial_\nu\mathcal W_D$ for $n=3$ steps.

%%%%%%%%%%%%%%%%%%%%%
\appendix
\section*{Appendix A: Theory of Sturm--Liouville operators}
In this appendix, we show the method by which the theory of Sturm--Liouville operators is applied in this study, as mentioned previously. This discussion focuses on showing that the operators used herein are self-adjoint operators such that they are purely discrete; therefore, the eigenfunctions corresponding to each operator form an orthonormal basis of the pertinent weighted $L^2$-space.     

By rewriting \eqref{eq:ode}, we obtain  
\begin{equation}\label{eq:ode 2}
   -(rf'(r))'+\frac{l^2}{r} f(r)=\lambda w(r) f(r) \qquad \left(\textrm{here } ':=\frac{\rm d}{{\rm d}r}\right), 
\end{equation}
where $w(r)=r/c(r)$. 
For a fixed $l\in \mathbb Z$, an operator $\mathcal S$ is defined on the weighted $L^2$-space with the weight $w(r)$, $L^2_w((0,1]):=L^2((0,1], rc(r)^{-1})$ corresponding to \eqref{eq:ode}, as follows:
\begin{equation*}
    \mathcal S(f)(r):=-(rf'(r))'+\frac{l^2}r f(r).
\end{equation*}
Therefore, $\mathcal S$ is a Sturm--Liouville operator acting on $L^2_w((0,1])$, and \eqref{eq:ode 2} is its eigenvalue equation. To obtain a {\it self-adjoint} operator, $\mathcal S$ is examined using the theory of Sturm--Liouville operators \cite{zettl05}. Because $p(x):=1/r$ and $q(x):=l^2/r$ are $L^1-$integrable near $r=1$ but not near $r=0$; therefore, $r=1$ is a regular endpoint and $r=0$ is a singular endpoint. First, at the regular endpoint $r=1$, we must define a boundary condition. For example, a well-known condition is the separated condition, $\alpha f(1)+\beta f'(1)=0$. For the singular endpoint $r=0$, we must classify the endpoint. In other words, based on the Weyl theory of Sturm--Liouville operators, it must be checked whether the endpoint $r=0$ satisfies the limit-circle (LC) classification, which indicates that all solutions to \eqref{eq:ode 2} are in $L^2_w((0,\epsilon))$ for $\epsilon\in (0,1)$ or the limit-point (LP) classification, which indicates that there must be a solution to \eqref{eq:ode 2} that is not $L^2_w$-integrable near $r=0$ (additional details can be found in Theorem 7.7 in \cite{pryce93}). Regarding the LP/LC classification of the endpoints, it is known (referring to Theorem 7.2.2 in \cite{zettl05}) that it suffices to examine only one (complex) value of $\lambda$. For convenience, we perform this examination when $\lambda=0$ (i.e., $\mu=0$ in \eqref{eq:ode}). 

In the problem considered in our study, the LP/LC classification depends on $l$ (when $l=0$ or when $l\in\mathbb{N}$). Directly solving \eqref{eq:ode 2} when $\lambda=0$ yields the following two linearly independent solutions. 
\begin{enumerate}
    \item[(i)] (when $l=0$) $1$ and $\ln r$
    \item[(ii)] (when $l\in\mathbb{N}$) $r^{-l}$ and $r^l$
\end{enumerate}
Because of the assumptions $w(r)=r/c(r)$ and $0< c_m \leq c(r) \leq c_M<\infty$, the solutions $1$, $\ln r$, and $r^{l}$ are $L^2_w$-integrable near $r=0$, but $r^{-l}$ is not. Therefore, the endpoint $r=0$ is LC when $l=0$ and LP when $l\in\mathbb{N}$. This implies that in order to obtain self-adjoint operator $\mathcal S$, we need not apply a boundary condition at $r=0$ when $l\in\mathbb{N}$. However, we require a boundary condition (for example, $f(0)=1$) when $l=0$ (additional details can be found in Theorem 10.4.1 in \cite{zettl05}). Therefore, in either case, our operator $\mathcal S$ can be assumed to be a self-adjoint operator.

Moreover, all solutions to \eqref{eq:ode 2} ($1$, $\ln r$, $r^{-l}$, and $r^{l}$) are non-oscillating near $0$, that is, there is a  $\epsilon>0$ such that the solutions are nonzero in $(0,\epsilon)$. The endpoint $r=0$ is considered {\em non-oscillatory} according to the Sturm--Liouville theory. Furthermore, for an endpoint in the LC case and $p(r)=1/r>0$ in $(0,1]$, the LCO (limit-circle and oscillatory) or LCNO (limit-circle and non-oscillatory) classifications are independent of $\lambda$. (see Theorem 7.3.1 in \cite{zettl05}). This implies that the self-adjoint operator $\mathcal S$ has a purely discrete spectrum (i.e., it has eigenvalues only with finite multiplicities and no continuous spectrum). 
\begin{thm}[Theorem 7.5 in \cite{pryce93} or Theorem 10.6.1 or Theorem 10.12.1 in \cite{zettl05}]
If both endpoints are in the LC (or regular) classification, then the spectrum is purely discrete and without a lower bound. If at least one endpoint is on the LP classification, the spectrum is purely discrete if and only if each LP endpoint is non-oscillatory for one $\mu\in\mathbb{C}$ (therefore, for all $\mu$). 
\end{thm}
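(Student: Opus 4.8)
The statement collects two standard facts about the self-adjoint realization $\mathcal S$, and the plan is to prove each through the resolvent and oscillation machinery for singular Sturm--Liouville operators, reusing the endpoint classification carried out in Appendix~A. Throughout I would fix $\lambda$ in the resolvent set and build the Green's function of $\mathcal S-\lambda$ from two solutions $\psi_0,\psi_1$ of $\mathcal S\psi=\lambda w\psi$ chosen to satisfy the imposed conditions at $r=0$ and at $r=1$, so that $(\mathcal S-\lambda)^{-1}$ is the integral operator with kernel $G_\lambda(r,s)=W^{-1}\psi_0(\min(r,s))\,\psi_1(\max(r,s))$, $W$ being the constant Wronskian.

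For the first assertion, assume both endpoints are regular or LC. Then $\psi_0\in L^2_w$ near $r=0$ and $\psi_1\in L^2_w$ near $r=1$ — precisely what the LC/regular classification of Appendix~A supplies — so $G_\lambda$ is square-integrable against $w(r)w(s)\,\mathrm{d}r\,\mathrm{d}s$. Hence $(\mathcal S-\lambda)^{-1}$ is a Hilbert--Schmidt operator, in particular compact and self-adjoint, and the spectral theorem for such operators yields a purely discrete spectrum of isolated finite-multiplicity eigenvalues with no finite accumulation point. Because $L^2_w((0,1])$ is infinite dimensional and the resolvent is injective, it has infinitely many nonzero eigenvalues, hence $\mathcal S$ has infinitely many eigenvalues; having no finite accumulation point, they are unbounded, which is the unboundedness asserted in the first part of the theorem.

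For the second assertion, at least one endpoint is LP, the resolvent need no longer be Hilbert--Schmidt, and discreteness must be decided by oscillation theory. I would split $(0,1]$ at an interior point in the Glazman manner, so that $\sigma_{\mathrm{ess}}(\mathcal S)$ is the union of the essential spectra of the two half-interval operators while the regular/LC half contributes nothing. At the LP endpoint I would use the Pr\"ufer transformation to count zeros of real solutions and invoke the identity $\inf\sigma_{\mathrm{ess}}=\Lambda$, where $\Lambda$ is the oscillation threshold, the supremum of those real $\lambda$ for which $\mathcal S\psi=\lambda w\psi$ is non-oscillatory at the endpoint. Pure discreteness is then equivalent to $\sigma_{\mathrm{ess}}=\emptyset$, i.e.\ to $\Lambda=+\infty$, i.e.\ to non-oscillation at every $\lambda$; the reduction to a single test value ``for one, hence for all'' rests on the monotonicity of the Pr\"ufer angle in $\lambda$ (Sturm comparison) and the classification results of Theorems~7.2.2 and 7.3.1 in \cite{zettl05}.

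The first assertion is routine once the $L^2_w$ membership of $\psi_0,\psi_1$ from Appendix~A is in hand. I expect the main obstacle to be the LP case, and specifically the identity $\inf\sigma_{\mathrm{ess}}=\Lambda$ that ties a spectral quantity to a purely ODE-theoretic oscillation count: proving it needs the full Sturm comparison and Pr\"ufer-angle apparatus together with careful control of the oscillatory/non-oscillatory transition at the singular endpoint, so I would ultimately lean on the detailed arguments of \cite{pryce93,zettl05} for that delicate step.
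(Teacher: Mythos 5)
The paper never proves this theorem: it is imported verbatim from Theorem 7.5 of \cite{pryce93} and Theorems 10.6.1/10.12.1 of \cite{zettl05}, so your sketch can only be measured against the standard arguments in those references --- and in outline you follow them faithfully (Hilbert--Schmidt resolvent in the regular/LC case, Glazman splitting and the oscillation threshold $\Lambda=\inf\sigma_{\mathrm{ess}}$ in the LP case). Two of your steps, however, contain genuine gaps. In the first part, your Hilbert--Schmidt argument correctly yields a purely discrete spectrum, but the clause ``without a lower bound'' does not follow: from ``infinitely many eigenvalues with no finite accumulation point'' you only get unboundedness in absolute value, and unboundedness \emph{below} is in fact false when the endpoints are regular or LC non-oscillatory (a regular Neumann problem has spectrum bounded below). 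Where that clause is true it comes from an LCO oscillation count --- for every prospective bound one exhibits solutions with arbitrarily many zeros, forcing eigenvalues below it --- and no compactness estimate can produce it. (This also flags that the statement as transcribed in the paper is itself imprecise; in \cite{zettl05} the bounded-below/unbounded-below dichotomy is governed by LCNO versus LCO, not by the LC classification alone.)

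In the LP part, the reduction ``non-oscillatory for one $\mu$, hence for all'' cannot rest on Pr\"ufer-angle monotonicity, because Sturm comparison with a positive weight propagates non-oscillation \emph{downwards} in $\lambda$, never upwards: non-oscillation at $\lambda_0$ gives non-oscillation for all $\lambda\le\lambda_0$ and says nothing above $\lambda_0$. Oscillation at an LP endpoint genuinely depends on $\lambda$ in general --- e.g.\ $-f''-x^{-1}f=\lambda f$ near $+\infty$ is non-oscillatory for $\lambda<0$, oscillatory for $\lambda>0$, with $\sigma_{\mathrm{ess}}=[0,\infty)$ --- so the correct criterion, which your own identity $\sigma_{\mathrm{ess}}=\emptyset\iff\Lambda=+\infty$ already encodes, is non-oscillation for \emph{every} real $\lambda$. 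The $\lambda$-independence results you invoke do not close this hole: Theorem 7.3.1 of \cite{zettl05} gives $\lambda$-independence of oscillation only at LC endpoints, and Theorem 7.2.2 concerns the LP/LC dichotomy, not oscillation. For the operators actually used in this paper the stronger ``all $\lambda$'' hypothesis can be verified directly --- the $\lambda=0$ solutions $1$, $\ln r$, $r^{\pm l}$ are non-oscillatory, and since the weight $r/c(r)$ is bounded the term $\lambda w f$ is dominated near $r=0$ by $l^{2}f/r$, so non-oscillation persists for every $\lambda$ --- and that direct verification, rather than a ``one $\mu$ implies all $\mu$'' principle at an LP endpoint, is what the application in Appendix A actually needs.
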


In general, any function in $L^2_w$-space can be represented in the integral form via solutions to \eqref{eq:ode 2}.
Because $\mathcal S$ is purely discrete, the spectral theorem states that the projection-valued measure corresponding to $\mathcal S$ only has a point spectrum, which implies that the integral representation for the $L^2_w$-functions becomes a series function, and the set of eigenfunctions to $\mathcal S$ is dense in $L^2_w((0,1])$. 
\begin{thm}[Combination of Theorem 3.10 and Section 3.6 in \cite{folland92} or Theorem XIII.64 in \cite{R&S78}]
 For every singular Sturm--Liouville problem on $(a,b)$ such that the spectrum is purely discrete, there is an orthonormal basis $\{\phi_n \}_{n=1}^{\infty}$ of $L^2_w(a,b)$ consisting of eigenfunctions. 
\end{thm}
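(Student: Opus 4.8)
The plan is to deduce this completeness statement directly from the spectral theorem for self-adjoint operators, using the hypothesis of purely discrete spectrum to force the spectral measure to be purely atomic. As a preliminary step I would take for granted, exactly as established in the preceding discussion of this appendix, that the Sturm--Liouville operator $\mathcal S$, equipped with the separated boundary condition at the regular endpoint and (only when the singular endpoint is limit-circle) at the singular endpoint, is self-adjoint on $L^2_w(a,b)$. The assertion to be proved then reduces to the standard fact that a self-adjoint operator whose spectrum is purely discrete admits an orthonormal basis of eigenvectors.

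First I would unpack the hypothesis: a \emph{purely discrete} spectrum means that $\sigma(\mathcal S)$ consists of isolated real eigenvalues $\lambda_1<\lambda_2<\cdots$, each of finite multiplicity, with no finite accumulation point and no continuous part. By the spectral theorem there is a projection-valued measure $E$ with $\mathcal S=\int_{\mathbb R}\lambda\,{\rm d}E(\lambda)$, and discreteness forces $E$ to be concentrated on $\{\lambda_n\}$. Thus each $E(\{\lambda_n\})$ is the orthogonal projection onto the finite-dimensional eigenspace $\ker(\mathcal S-\lambda_n)$, and $\sum_n E(\{\lambda_n\})=I$ in the strong operator topology, so that $L^2_w(a,b)$ splits as the orthogonal direct sum $\bigoplus_n\ker(\mathcal S-\lambda_n)$. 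Applying Gram--Schmidt inside each finite-dimensional summand then produces finitely many orthonormal eigenfunctions, and the countable union of these, relabelled $\{\phi_n\}_{n=1}^\infty$, is an orthonormal system whose closed linear span is all of $L^2_w(a,b)$, which is the desired basis.

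An equivalent and more hands-on route, and the one underlying the cited references, is to fix a real $\mu_0$ in the resolvent set and show that the resolvent $R=(\mathcal S-\mu_0)^{-1}$ is a compact self-adjoint operator. The spectral theorem for compact self-adjoint operators then yields an orthonormal basis of eigenvectors of $R$, which are precisely the eigenvectors of $\mathcal S$ with eigenvalues $(\lambda_n-\mu_0)^{-1}\to 0$. I expect the compactness of $R$ to be the main obstacle, and indeed the only genuinely nontrivial point in the argument: one realises $R$ as an integral operator against the Green's function assembled from two suitable solutions of $\mathcal S u=\mu_0 u$, and must verify that this kernel is square-integrable with respect to the weight $w$ on $(a,b)\times(a,b)$, so that $R$ is Hilbert--Schmidt and hence compact. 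The delicate place is the singular endpoint, where one has to use the $L^2_w$-integrability of the relevant solutions near that endpoint, established earlier through the limit-circle/limit-point analysis, to confirm that the Green's kernel retains square-integrability. Once that square-integrability is secured the remainder of the argument is routine.
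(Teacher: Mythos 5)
Your proof is correct and coincides with the paper's treatment: the paper states this as a cited classical result (Folland; Reed--Simon) and justifies it in a single sentence by exactly your first route --- the spectral theorem applied to the self-adjoint realisation of $\mathcal S$, with pure discreteness forcing the projection-valued measure to be atomic, so that $L^2_w(a,b)$ decomposes as the orthogonal direct sum of the finite-dimensional eigenspaces. Your alternative compact-resolvent/Green's-function route is the one the cited references (and the paper's own unused draft appendix) actually carry out, and your identification of the square-integrability of the Green's kernel near the singular endpoint as the only delicate step is accurate, but nothing beyond the first argument is needed.
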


In summary (boundary condition at $r=0$ depending on $l$ may or may not be applied), in the problem represented by \eqref{eq:ode}, we can obtain a self-adjoint operator that is purely discrete such that its eigenfunctions form an orthonormal basis of the weighted $L^2$-space $L^2_w((0,1])$. 

%\begin{rmk}\label{Remark 7}
{\bf Remarks }  
Similar to the 2D setting, this discussion also applies for $n\ge 3$. In fact, spherical harmonics provides us the following relation:
\begin{equation}\label{eq:ode 3}
   -(r^{n-1}f'(r))'+l(l+n-2)r^{n-3} f(r)=\lambda r^{n-1}c(r)^{-1} f(r),
\end{equation}
which is similar to \eqref{eq:ode 2}. Solving \eqref{eq:ode 3} directly with $\lambda=0$ yields the following two linearly independent solutions. 
\begin{enumerate}
    \item[(i)] (when $l=0$ or $l=2-n$) \, $1$ and $r^{2-n}$  
    \item[(ii)] (when $l\in\mathbb{Z}\setminus\{0,2-n\}$) \, $r^{l}$ and $r^{-l-n+2}$ 
\end{enumerate}
Because all these solutions are non-oscillatory near $0$, a similar argument for $n=2$ implies that $0$ is either in the LC or LPNO (LP non-oscillatory) classification($1$ is a regular endpoint). In both cases, the related operators are purely discrete; therefore, the eigenfunctions form an orthonormal basis of the weighted $L_w^2$-space with $w(r)=r^{n-1}c(r)^{-1}$.
%\end{rmk}

\section*{Acknowledgements}
This work was supported by the National Research Foundation of Korea, a grant funded by the Korean government (NRF-2022R1C1C1003464, NRF-2020R1F1A1A01072414, NRF-2019R1F1A1061300).
\bibliographystyle{plain}

\end{document}